\newcommand\cyr
\renewcommand\rmdefault{wncyr}
\renewcommand\sfdefault{wncyss}
\renewcommand\encodingdefault{OT2}
\DeclareTextFontCommand{\textcyr}{\cyr}
\def\cprime{\char"7E }
\DeclareMathAlphabet{\curly}{OT1}{rsfs}{n}{it}
\renewcommand\appendix{\par
\setcounter{section}{0}%
\setcounter{subsection}{0}%
\setcounter{table}{0}
\setcounter{figure}{0}
\setcounter{equation}{0}
\gdef\thetable{\Alph{table}}
\gdef\thefigure{\Alph{figure}}
\gdef\theequation{\Alph{section}-\arabic{equation}}
\section*{Appendix}
\gdef\thesection{\Alph{section}}
\setcounter{section}{0}}
\def\CC{\mathbb{C}}
\def\PP{\mathbb{P}}
\def\ZZ{\mathbb{Z}}
\def\RR{\mathbb{R}}
\def\s-{\setminus}
\newtheorem{thm}{Theorem}[section]
\newtheorem{prop}[thm]{Proposition}
\newtheorem{lemma}[thm]{Lemma}
\newtheorem{cor}[thm]{Corollary}
\newtheorem{rmk}[thm]{Remark}
\numberwithin{equation}{section}
\begin{document}

\title[Real rational curves on K3 surfaces]
{Qualitative aspects of counting real rational curves on real K3 surfaces}

\author[Kharlamov]{Viatcheslav Kharlamov}

\address{
        IRMA UMR 7501, Universit\'e de Strasbourg, 7 rue Ren\'e-Descartes, 67084 Strasbourg Cedex,  FRANCE}

\email{kharlam@math.unistra.fr}

\author[R\u asdeaconu]{Rare\c s R\u asdeaconu}

\address{        
        Department of Mathematics, 1326 Stevenson Center, Vanderbilt University, Nashville, TN, 37240, USA}

\email{rares.rasdeaconu@vanderbilt.edu}

\keywords{$K3$ surfaces, real rational curves, Yau-Zaslow formula, Welschinger invariants}

\subjclass[2000]{Primary: 14N99; Secondary: 14P99, 14J28}

\date{\today}

\maketitle{}

\begin{abstract}
We study qualitative aspects of the Welschinger-like $\mathbb Z$-valued 
count of real rational curves on primitively polarized real $K3$ surfaces.
In particular, we prove that with respect to the degree of the polarization,
at logarithmic scale, the rate of growth of the number of such real rational curves  
is, up to a constant factor, the rate of growth of the number of complex rational 
curves.  We indicate a few instances when the lower bound for the number of 
real rational curves provided by our count is sharp. In addition, we exhibit various 
congruences between real and complex counts.
\end{abstract}

\maketitle

\thispagestyle{empty}

\epigraph{\scriptsize\cyr{\it{"V chislakh est\cprime  \, nechto, chego v slovakh, 
dazhe kriknuv ikh, net."}}
}
{\cyr{\it{Iosif Brodski{\u i}, 
Polden\cprime \,  v komnate.}}
}
\footnote{"In the numbers there is something, which in the words, even having 
shouted them, is not." Joseph Brodsky,
"Noon in a Room".}

\section*{Introduction}
The discovery by J.-Y.~Welschinger \cite{w} of a deformation 
invariant $\mathbb Z$-valued count of real rational curves
interpolating real collections of points on a real rational surface 
has allowed to respond in an affirmative way to the long standing 
problem of existence of real solutions in this enumerative 
problem. Moreover, the lower bound on the number of real 
solutions provided by the Welschinger invariants has happened
be so powerful that it allowed \cite{iks} to disclose a remarkable 
new {\it phenomenon of abundance}: in the logarithmic scale, 
when the degree of curves is growing,  the number of real solutions 
happens to be of about the same growth rate as the number of 
complex ones. Later on, similar abundance phenomena were 
observed in a few other, even more classical, enumerative problems, 
like enumerating linear subspaces on projective hypersurfaces  
(see \cite{kf2} and references therein). All this originates further 
natural questions, which are essential for applications:
{\it what are the asymptotic and arithmetical properties of the 
lower bounds provided by such an invariant $\mathbb Z$-valued count; 
how non trivial and sharp are these lower bounds?}

A response to  these questions requires a comparison with 
the behavior of the numbers of solutions over the complex field, 
which in the above mentioned problems are given by some 
Gromov-Witten and Schubert numbers, respectively. Up to our 
knowledge, the corresponding aspects of the complex 
enumerative algebraic geometry are rarely treated in the 
literature: for related information we refer the interested 
reader to \cite{IF}, \cite{GM}, and \cite{iks3}. The sharpness of 
Welschinger lower bounds is also little studied; here, we can cite only
\cite{w_o}.

In our previous paper \cite{kr}, we considered the problem of counting
real rational curves on primitively polarized real $K3$ surfaces, 
introduced an appropriate invariant $\mathbb Z$-valued count and 
expressed the answer in a closed form, which can be viewed as a 
real version of the Yau-Zaslow formula (see Sect. \ref{rem} below). 
Our aim here is to show that thus obtained invariant lower bounds have 
similar peculiar asymptotic and arithmetic properties as those that were 
observed in the previously studied real enumerative problems,
and to indicate some instances where the lower bounds are sharp.

The note is organized as follows. First of all we recall the precise statement 
of the real version of the Yau-Zaslow formula. Then, we start the qualitative 
analysis by relating our formula with the Dedekind eta-function and use one of 
Jacobi identities to establish some positivity property of the Welchinger invariants. 
In the next subsection we apply Hardy-Ramanujan-Uspensky results \cite{hr, Us} 
on the asymptotic behavior of the number of partitions to determine the asymptotic 
behavior of the Welschinger invariants in the logarithmic scale and to exhibit
an abundance phenomenon. The third subsection is devoted to the comparison 
modulo $2, 3, 4$, and $8$ of our Welschinger-type invariants, with the corresponding 
reduced Gromov-Witten invariants in the complex case, computed by the Yau-Zaslow
formula. Finally, in the last subsection we apply Kulikov's type I and II degenerations  
\cite{ku} to establish the sharpness of our lower bounds for certain real deformation 
types of real $K3$ surfaces. A closing section contains a couple of concluding remarks.
Some numerical data collected to illustrate the results obtained is shown in the table 
in the appendix.

\section{Real version of the Yau-Zaslow formula}
\label{rem}

Let $X$ be a generic real $K3$ surface admitting a complete 
real $g$-dimensional linear system of curves of genus $g.$ 
If $g\ge 2,$ assume, in addition, that $X$ is of Picard rank $1$ 
and the curves in the linear system belong to a primitive divisor class. 
Let $c_g$ denote the number of complex rational curves in this 
linear system, and $w_g=n_+-n_-$ the number of real rational curves 
in the same linear system counted with Welschinger sign; that is with 
the sign $+$, if the number of real solitary points is even, and with the sign 
$-$, otherwise (recall that by Chen's theorem \cite[Theorem 1.1]{chen}
the genericity assumption ensures that all the rational curves in our linear 
system are nodal).

The numbers $c_g$ depend only on $g$ and not on a specific choice 
of the surface $X$, and obey the Yau-Zaslow formula \cite{yz}

\begin{equation}
\label{yz-formula}
\sum_{g\geq 0} c_g q^g=
	\prod_{s\geq 1}\frac {1}{(1-q^{s})^{e_\CC}},
\end{equation}
where $e_\CC=24$ is the Euler characteristic of $X.$ As we proved in 
\cite{kr}, $w_g$ depends only on the Euler characteristic $e_\RR$ of
the real part $X_\RR$ of $X$, and for $e_\RR$ fixed the generating 
function for $w_g$ is as follows:
\begin{equation}
\label{ryz-formula}
\sum_{g\geq 0} w_g q^g=
	\prod_{r\geq 1}\frac {1}{(1+q^r)^{e_\RR}}
	\prod_{s\geq 1}\frac {1}{(1-q^{2s})^{\frac{e_\CC-e_\RR}{2}}}.
\end{equation}

Note that $e_\RR$ is always even and its values vary between $-18$ and 
$20$ (for references and more details on the topology of real $K3$ surfaces, 
see \cite{DK}).

\section{Analysis of the real version}

\subsection{Positivity}
\begin{thm}\label{positive-monotone} 
For each fixed $e_\RR< 0$, all $w_g$ are positive and form a 
strictly increasing sequence $|e_\RR| = w_1<w_2<w_3<\dots .$
For $e_\RR=0$, all $w_g$ with odd $g$ are zero, while those with 
even $g$ are positive and form a strictly increasing sequence 
$12=w_2<w_4<\dots $. For each fixed $e_\RR>0,$ all the terms 
of the sequence $(-1)^gw_g$ are positive and form a strictly increasing 
sequence $e_\RR = -w_1<w_2<-w_3<\dots .$
\end{thm}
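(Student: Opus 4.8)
The plan is to massage the generating series \eqref{ryz-formula} into a shape where the relevant signs become manifest, and then to reduce both the positivity and the monotonicity to a single elementary estimate. Writing $P(q)=\prod_{n\ge1}(1-q^n)$ and $P_2(q)=\prod_{n\ge1}(1-q^{2n})$, Euler's identity $\prod_{n\ge1}(1+q^n)=P_2/P$ turns \eqref{ryz-formula} (with $e_\CC=24$) into
\[
\sum_{g\ge0}w_g q^g=P(q)^{e_\RR}\,P_2(q)^{-(e_\RR+24)/2}.
\]
I would then invoke the Jacobi identity $P^2/P_2=\vartheta_4(q)$, where $\vartheta_4(q)=\sum_{k\in\ZZ}(-1)^k q^{k^2}=\prod_{n\ge1}(1-q^{2n})(1-q^{2n-1})^2$, which rewrites the series as
\[
\sum_{g\ge0}w_g q^g=\vartheta_4(q)^{e_\RR/2}\,P_2(q)^{-12}.
\]
Since $e_\RR$ is even, this is a well-defined integral power.

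To read off the signs I would substitute $q\mapsto-q$. Because $k+k^2=k(k+1)$ is always even, $\vartheta_4(-q)=\sum_k q^{k^2}=\vartheta_3(q)$ has nonnegative coefficients, while $P_2(-q)=P_2(q)$, so
\[
\sum_{g\ge0}(-1)^g w_g q^g=\vartheta_3(q)^{e_\RR/2}\,P_2(q)^{-12}.
\]
For $e_\RR>0$ the right-hand side is a product of series with nonnegative coefficients, already forcing $(-1)^g w_g\ge0$; the cases $e_\RR=0$ (where the series is $P_2^{-12}$, supported in even degrees) and $e_\RR<0$ (where \eqref{ryz-formula} is itself a product of series with nonnegative coefficients) are immediate. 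The decisive step for the hard case is the Jacobi triple product $\vartheta_3=\prod_{n\ge1}(1-q^{2n})(1+q^{2n-1})^2$, which recasts the sign-corrected series, for $e_\RR>0$, as the manifestly positive product
\[
\sum_{g\ge0}(-1)^g w_g q^g=\prod_{n\ge1}(1+q^{2n-1})^{e_\RR}\prod_{n\ge1}(1-q^{2n})^{-(12-e_\RR/2)},
\]
both exponents being positive precisely because $0<e_\RR\le20$. This has the same shape as \eqref{ryz-formula} in the range $e_\RR<0$: each contains a factor $(1+q)^{A}$ with $A\ge2$ and a factor $(1-q^2)^{-C}$ with $C\ge2$.

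For the strict monotonicity I would prove a uniform statement: if $G(q)=(1+q)^{A}(1-q^2)^{-C}R(q)$ with $A\ge2$, $C\ge2$, and $R$ a product of factors $(1+q^{a})^{\mu}$ and $(1-q^{2b})^{-\nu}$ with $\mu,\nu\ge0$ and $R(0)=1$, then $(1-q)G(q)$ has every coefficient $\ge1$. Indeed $(1-q)(1+q)^{A}=(1+q)^{A-1}(1-q^2)$, so
\[
(1-q)G(q)=(1+q)^{A-1}(1-q^2)^{-(C-1)}R(q),
\]
and since $C-1\ge1$ the factor $(1-q^2)^{-(C-1)}$ has a positive coefficient in every even degree; pairing it with the terms $1$ and $(A-1)q$ of $(1+q)^{A-1}$ yields a coefficient $\ge1$ in every degree, and multiplication by the nonnegative series $R$ with $R(0)=1$ preserves this. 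Writing $v_g$ for the coefficients of the appropriate sign-corrected series, the bound $v_g-v_{g-1}=[q^g]\,(1-q)G\ge1$ gives the claimed strict increase, with $v_1=A$ recovering $w_1=|e_\RR|$ for $e_\RR<0$ and $-w_1=e_\RR$ for $e_\RR>0$. The case $e_\RR=0$ runs identically after the substitution $u=q^2$: from $P_2^{-12}=(1-u)^{-12}\prod_{s\ge2}(1-u^s)^{-12}$ one checks that $(1-u)$ times this series has all coefficients $\ge1$, forcing $w_2<w_4<\cdots$.

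The main obstacle is the case $e_\RR>0$. In the raw product \eqref{ryz-formula} the substitution $q\mapsto-q$ creates a factor $\prod_n(1+q^{2n})^{-e_\RR}$ with alternating coefficients, so neither the alternation of the $w_g$ nor their monotonicity is visible termwise; everything rests on first passing to the $\vartheta$-form and then re-expanding $\vartheta_3^{e_\RR/2}$ via the triple product into a genuinely positive product. Once that reorganization is in hand, the positivity and the monotonicity collapse into the single estimate on $(1-q)G$ above, and I expect the only remaining care to be the bookkeeping of the exponent ranges ($2\le|e_\RR|\le18$ and $0<e_\RR\le20$), which is exactly what guarantees $A\ge2$ and $C\ge2$ in each case.
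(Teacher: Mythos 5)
Your proposal is correct, and its first half coincides with the paper's own route: your identity $\sum w_g q^g=\vartheta_4(q)^{e_\RR/2}P_2(q)^{-12}$ is exactly the eta-quotient form \eqref{eta-quotient} combined with the Gauss identity \eqref{theta} (indeed $\eta^2(z)/\eta(2z)=P^2/P_2=\vartheta_4$ and $q/\sqrt{\Delta(2z)}=P_2^{-12}$), the case $e_\RR\le 0$ is handled by the same direct positivity reading of \eqref{ryz-formula}, and the substitution $q\mapsto -q$ is precisely the paper's ``another possible approach'' mentioned at the end of its proof. Where you genuinely diverge is in how monotonicity is then secured. The paper disposes of it with the one-line remark that multiplying positive series with increasing coefficients yields a series with increasing coefficients --- a statement that, taken literally, needs care here, since the even-supported factor \eqref{Delta} has vanishing odd coefficients and $\vartheta_3^{e_\RR/2}$ has zero coefficients for small $e_\RR$, so strictness must come from the interplay of the factors. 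You instead expand $\vartheta_3^{e_\RR/2}$ by the Jacobi triple product, cancel $\prod_{n\ge1}(1-q^{2n})^{e_\RR/2}$ against $P_2^{-12}$, and arrive at the manifestly positive product $\prod_{n\ge1}(1+q^{2n-1})^{e_\RR}\prod_{n\ge1}(1-q^{2n})^{-(12-e_\RR/2)}$, valid exactly because $0<e_\RR\le 20$; this puts the case $e_\RR>0$ in the same shape as $e_\RR<0$ and lets one lemma --- that $(1-q)G$ has every coefficient $\ge 1$ when $G=(1+q)^A(1-q^2)^{-C}R$ with $A,C\ge 2$, via the telescoping $(1-q)(1+q)^A=(1+q)^{A-1}(1-q^2)$ --- settle positivity and strict monotonicity in all three cases simultaneously, with the quantitative bonus $v_g-v_{g-1}\ge 1$ and the correct initial values $v_1=|e_\RR|$ and $w_2=12$. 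Your exponent bookkeeping ($2\le|e_\RR|\le 18$, $0<e_\RR\le 20$, $e_\RR$ even) matches the constraints recorded after \eqref{ryz-formula}, and every identity you invoke ($\prod(1+q^n)=P_2/P$, $P^2/P_2=\vartheta_4$, $\vartheta_4(-q)=\vartheta_3(q)$ since $k+k^2$ is even, and the triple product) checks out. In short: same reduction and same sign-revealing substitution as the paper, but a sharper, fully rigorous monotonicity mechanism where the paper contents itself with a plausibility remark; the extra triple-product step is what makes the $e_\RR>0$ case termwise transparent, at the modest cost of using one more classical identity.
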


\begin{proof} To prove these statements in the case $e_\RR\le 0$ 
it is sufficient to notice that the second product power series in 
(\ref{ryz-formula}) has nonzero coefficients only in even degrees 
and each of these coefficients is positive, while the first product 
power series has all coefficients positive as soon as $e_\RR<0.$
The strict monotonicity claim follows immediately by noticing that  
multiplying positive power series with increasing sequence of 
coefficients yields a positive series with increasing sequence of 
coefficients.

Assume now that $e_\RR>0.$ First, we rewrite our formula 
(\ref{ryz-formula}) in terms of the Dedekind eta-function 
$$
\eta(z)=e^{\pi i z/12}\prod_{n\ge 1} (1-e^{2\pi i nz})=
q^{\frac1{24}}\prod_{n\ge 1}(1-q^n),
$$ 
and of the modular discriminant
$$
\Delta(z)=\eta^{24}(z)=q\prod_{n\ge 1} (1-q^n)^{24},
$$
where $z$ is in the upper half-plane, and  $q=e^{2\pi iz}.$ 
In this notation, formula (\ref{ryz-formula}) can be written as
\begin{equation}
\label{eta-quotient}
\sum_{g\geq 0} w_g q^g=
\frac{q}{\sqrt{\Delta(2z)}}\left(\frac{\eta^2(z)}{\eta(2z)}\right)^{\frac{e_{\RR}}2}
\end{equation}

For the above eta-quotient there is the following remarkable 
Gauss identity (see, for example, \cite[Corollary 1.3]{koh}):
\begin{equation}
\label{theta}
\frac{\eta^2(z)}{\eta(2z)}=1+2 \sum_{n\ge 1}(-1)^n q^{n^2}.
\end{equation}
To finish the proof it is sufficient now to notice that the property 
to have nonpositive coefficients in odd degrees and nonnegative 
coefficients in even degrees is preserved under multiplication of 
power series (with such a property) and that the power series
\begin{equation}
\label{Delta}
\frac{q}{\sqrt{\Delta(2z)}}= \prod_{n\ge 1} (1-q^{2n})^{-12}
\end{equation}
has positive coefficients in each even degree and zero coefficients 
in each odd degree. Another possible approach is to replace 
$q$ by $-q$, which makes all the power series involved to have 
positive coefficients, and then to apply the same arguments as 
above in the case $e_\RR<0.$ This argument yields the strict 
monotonicity claim.
\end{proof}

\subsection{Asymptopia}

\begin{lemma}
\label{multiplication} 
Let  $\sum a_nq^n$ and $\sum b_nq^n$ be two power series with 
positive coefficients, and $\sum p_nq^n=(\sum a_nq^n)(\sum b_nq^n)$  
the product power series. If, at a logarithmic scale, the coefficients 
$a_n$ and $b_n$ have the asymptotic behavior 
$\log a_n\sim (an)^\alpha$ and $\log b_n\sim (bn)^\alpha,$  
for some real constant $0<\alpha <1,$ then 
$\log p_n\sim (cn)^\alpha$ where $c=(a^\frac{\alpha}{1-\alpha}
+ b^\frac{\alpha}{1-\alpha})^\frac{1-\alpha}{\alpha}.$
\end{lemma}

\begin{proof} The result follows from
$$
\max_{0\le k\le n} \log a_kb_{n-k}\le \log p_n\le \log n + 
\max_{0\le k\le n} \log a_kb_{n-k}
$$
and
$$
\max_{0\le k\le n} \log a_kb_{n-k}\sim \left((a^\frac{\alpha}{1-\alpha}
+ b^\frac{\alpha}{1-\alpha})^\frac{1-\alpha}{\alpha} n\right)^\alpha.
$$
To derive the latter relation it is sufficient to bound, from above and 
from below, the sequences $a_n$, $b_n$ by sequences of the type
$\exp(k_\pm + (1\pm\epsilon)(an)^{\alpha})$ and 
$\exp(k_\pm+(1\pm\epsilon)(bn)^\alpha),$ respectively, and then let 
$\epsilon$ go to $0.$ The bounds needed here can be found by using 
the fact that
$((a^\frac{\alpha}{1-\alpha}
+ b^\frac{\alpha}{1-\alpha})^\frac{1-\alpha}{\alpha} n)^\alpha$ is exactly 
the maximal value of the function $(at)^\alpha +(b(n-t))^\alpha$
on the interval $[0,n].$
\end{proof}

\begin{thm}\label{growth}
The following asymptotic relations hold:
\begin{itemize}
\item[ i)] If $e_\RR< 0$, then 
$$
\log w_{n}\sim \pi\sqrt{\frac{4(e_\CC- 3e_\RR)}{e_\CC}\cdot n}\sim 
\sqrt{\frac{e_\CC- 3e_\RR}{4e_\CC}} \log c_n.
$$
\item[ ii)] If $e_\RR >  0$, then
$$
\log\vert  w_{n} \vert \sim 2\pi\sqrt{n} \sim \frac12 \log c_n.
$$
\item[ iii)] If $e_\RR=0$, then 
$$ 
\log w_{2n}\sim \frac12 \log c_{2n}.
$$
\end{itemize}
\end{thm}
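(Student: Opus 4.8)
The plan is to read off the logarithmic growth of each factor in the generating functions (\ref{yz-formula}) and (\ref{ryz-formula}) from the Hardy--Ramanujan--Uspensky asymptotics \cite{hr, Us}, and then to combine the factors by Lemma \ref{multiplication}. The basic input is that for a positive integer $k$ the coefficients of $\prod_{s\ge1}(1-q^s)^{-k}$ satisfy $\log[q^n]\sim 2\pi\sqrt{kn/6}$, while those of $\prod_{r\ge1}(1+q^r)^{k}$ satisfy $\log[q^n]\sim\pi\sqrt{kn/3}$ (the latter following from the former via $\prod(1+q^r)=\prod(1-q^{2r})/(1-q^r)$, or directly from the same saddle-point estimate). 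Applied to (\ref{yz-formula}) with $e_\CC=24$ this already gives $\log c_n\sim 4\pi\sqrt n$, the complex benchmark against which all three cases are measured.

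Case (iii) is immediate: for $e_\RR=0$ the first product in (\ref{ryz-formula}) is trivial, so $\sum w_gq^g=\prod_{s\ge1}(1-q^{2s})^{-12}$ is a series in $q^2$ with $w_{2n}=[u^n]\prod(1-u^s)^{-12}$, whence $\log w_{2n}\sim 2\pi\sqrt{2n}\sim\tfrac12\log c_{2n}$. For case (i) both factors of (\ref{ryz-formula}) manifestly have positive coefficients, with $\log$-coefficients asymptotic to $(an)^{1/2}$ and $(bn)^{1/2}$ for $a=\pi^2(-e_\RR)/3$ and $b=\pi^2(e_\CC-e_\RR)/6$ respectively, the second factor being supported on even degrees, where $\log[q^{2j}]\prod(1-q^{2s})^{-(e_\CC-e_\RR)/2}\sim\pi\sqrt{(e_\CC-e_\RR)(2j)/6}$. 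With $\alpha=\tfrac12$ the constant of Lemma \ref{multiplication} is simply $c=a+b$, and the max-based argument proving that lemma is insensitive to the even-degree support of the second factor (one maximizes $\sqrt{(-e_\RR)(N-x)}+\sqrt{\tfrac12(e_\CC-e_\RR)\,x}$ over $x\in[0,N]$). This yields $\log w_n\sim\pi\sqrt{(e_\CC-3e_\RR)n/6}$, equal to $\pi\sqrt{4(e_\CC-3e_\RR)n/e_\CC}$ for $e_\CC=24$, and dividing by $\log c_n\sim 4\pi\sqrt n$ produces the stated constant $\sqrt{(e_\CC-3e_\RR)/(4e_\CC)}$.

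The delicate case is (ii), where $e_\RR>0$ makes the first factor $\prod(1+q^r)^{-e_\RR}$ sign-indefinite, so Lemma \ref{multiplication} does not apply directly. Here I would pass to the eta-quotient form (\ref{eta-quotient}) and invoke the Gauss identity (\ref{theta}) together with (\ref{Delta}) to write $\sum w_gq^g=\prod_{n\ge1}(1-q^{2n})^{-12}\bigl(1+2\sum_{n\ge1}(-1)^nq^{n^2}\bigr)^{e_\RR/2}$. Replacing $q$ by $-q$ turns the theta factor into $\bigl(1+2\sum_{n\ge1}q^{n^2}\bigr)^{e_\RR/2}$, which has nonnegative coefficients (using $n^2\equiv n\bmod 2$), leaves $\prod(1-q^{2n})^{-12}$ unchanged, and, by the sign alternation of Theorem \ref{positive-monotone}, produces exactly $\sum|w_n|q^n$. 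The theta power contributes only polynomially bounded coefficients (they count signed representations as sums of $e_\RR/2$ squares), whereas $\prod(1-q^{2n})^{-12}$ has $\log$-coefficients $\sim 2\pi\sqrt n$; the constant term, equal to $1$, of the theta power then forces $|w_N|\ge[q^N]\prod(1-q^{2n})^{-12}$ from below, while the polynomial bound gives a matching upper bound. Hence $\log|w_n|\sim 2\pi\sqrt n\sim\tfrac12\log c_n$.

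I expect the principal obstacle to be precisely the lower bound in case (ii): a priori the sign-indefinite theta factor could force cancellations annihilating the exponential growth supplied by $\prod(1-q^{2n})^{-12}$. The substitution $q\mapsto -q$ is what resolves this, since it simultaneously realizes $|w_n|$ as a genuine coefficient and renders every factor nonnegative; the remaining work is the routine verification that the coefficients of an integral power of a theta series grow at most polynomially.
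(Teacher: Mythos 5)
Your proposal follows essentially the same route as the paper's proof: the Hardy--Ramanujan--Uspensky asymptotics for $P(n)$ and $Q(n)$ combined through Lemma \ref{multiplication} (with $\alpha=\tfrac12$, so that the constants add) for $e_\RR<0$, and the eta-quotient form (\ref{eta-quotient}) with the Gauss identity (\ref{theta}), the identity (\ref{Delta}), and the substitution $q\mapsto-q$ for $e_\RR\ge0$; all your constants check out, and your observation that the theta power has only polynomially bounded coefficients is exactly the paper's point. The one genuine lapse is in case (ii): since $\prod_{n\ge1}(1-q^{2n})^{-12}$ is a series in $q^2$, your lower bound $|w_N|\ge[q^N]\prod_{n\ge1}(1-q^{2n})^{-12}$ is vacuous for odd $N$, so as written you establish $\log|w_n|\sim2\pi\sqrt n$ only along even indices. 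The repair is immediate and is exactly what the paper does at this point (``if $e_\RR>0$, apply the monotonicity''): the strict monotonicity of $|w_n|$ from Theorem \ref{positive-monotone}, which you already invoke for the sign alternation, squeezes the odd indices between consecutive even ones; alternatively, the coefficient of $q$ in $\bigl(1+2\sum_{n\ge1}q^{n^2}\bigr)^{e_\RR/2}$ equals $e_\RR>0$, giving $|w_N|\ge e_\RR\,[q^{N-1}]\prod_{n\ge1}(1-q^{2n})^{-12}$ for odd $N$, which yields the same asymptotics. A minor methodological difference in case (i): you handle the even-degree support of the second factor by rerunning the max-based proof of Lemma \ref{multiplication} on the restricted convolution, which is legitimate since the maximizer of $\sqrt{a(N-x)}+\sqrt{bx}$ is interior and the objective is Lipschitz there; the paper instead first obtains the asymptotics of $w_{2n}$ and then again appeals to monotonicity. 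Case (iii) and the benchmark $\log c_n\sim4\pi\sqrt n$ coincide with the paper's treatment.
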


\begin{proof}
Hardy-Ramanujan and Uspensky results \cite{hr, Us}
on the the asymptotic behavior of the coefficients
in the power series expansions 
$$
\prod_{n\ge 1}(1+q^n)=\prod_{n\ge 1}\frac1{1-q^{2n-1}}= 
\sum_{n\ge 1} Q(n)q^n \quad\text{and}\quad
\prod_{n\ge 1}\frac1{1-q^{n}}= \sum_{n\ge 1}P(n)q^n
$$ 
give us the following equivalence relations:
\begin{equation}
\label{euler}
Q(n)\sim \frac{e^{\pi \sqrt\frac{n}3}}{4\cdot 3^{\frac14}\cdot n^\frac34}
\end{equation}
and 
\begin{equation}
\label{hardy}
P(n)\sim \frac{e^{\pi \sqrt\frac{2n}3}}{4n\sqrt{3}}.
\end{equation}
Thus, in the logarithmic scale, $\log P(n)\sim \log Q(2n)$ and 
$\log Q(n)\sim\pi\sqrt{n/3}.$ Then Lemma \ref{multiplication} 
implies that
$\log c_n\sim \pi \sqrt{e_\CC\cdot\frac{2n}3}=4\pi\sqrt{n}.$

If $e_\RR< 0$, then the coefficients $u_r$  in the power expansion 
$\sum u_rq^r$ of the first product in the  formula (\ref{ryz-formula}) 
are positive, and, according to Lemma \ref{multiplication} and formula 
(\ref{euler}), they grow in the log-scale as $\pi \sqrt{\frac{-e_\RR r}3}.$ 
The coefficients $v_{r}$ in the power expansion $\sum v_{r}q^{r}$ of the 
second product are vanishing in odd degree and they are positive in even 
degree. Lemma \ref{multiplication} and formula (\ref{hardy}) imply that 
the coefficients $v_{2s}$ grow in the log-scale as 
$\pi \sqrt{\frac{e_\CC-e_\RR}2 \cdot \frac{2s}3}.$ 
It follows that 
$$
\log w_{2n}\sim \pi \sqrt{\frac{e_\CC-3e_\RR}2 \cdot \frac{2n}3}.
$$
Wherefrom, by monotonicity of the sequence $w_n$,
$$
\log w_n\sim \pi \sqrt{\frac{e_\CC-3e_\RR}2 \cdot \frac{n}3}\sim 
\sqrt{\frac{e_\CC-3e_\RR}{4e_\CC}}\log c_n.
$$

If $e_\RR\ge 0$, then the proof is similar. This time we start from formula 
(\ref{eta-quotient}). Notice that according to formula (\ref{Delta}) and 
Lemma \ref{multiplication}, the coefficients of the first factor in the odd 
degrees are zero, while the coefficients in degree $2n$ grow in the log-scale 
as $\pi \sqrt{12\cdot \frac{2n}3} \sim \frac12 \log c_{2n}.$ Thus, there remains 
to notice that, due to formula (\ref{theta}), the coefficients of the second factor 
have polynomial growth, and, then, if $e_\RR>0$, to apply the monotonicity.
\end{proof}

\begin{cor} 
The number $r_g(X)$ of real rational curves  in the divisor class of the 
primitive polarization of $X$ satisfies the following bounds:
$$
\phi(g)=\vert w_g\vert \le r_g(X)\le c_g= \psi(g), 
$$
where  
$$
\log \psi(g)=
4\pi\sqrt{g}+o(\sqrt{g}),
$$
and, for a fixed $e_\RR\ne 0$,
$$
\log \phi(g)= 4\pi \rho\sqrt{g}+o(\sqrt{g})
$$
with $\rho=\frac12$ if $e_\RR> 0$ and 
$\rho=\sqrt{\frac{e_\CC- 3e_\RR}{4e_\CC}} $
if  $e_\RR<0$, while for $e_\RR=0$ it holds
$$
\log \phi(2g)= 2 \pi\sqrt{2g}+o(\sqrt{g}).
$$ 
\end{cor}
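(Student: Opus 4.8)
The plan is to separate the corollary into two essentially independent parts: the chain of inequalities $\phi(g) \le r_g(X) \le \psi(g)$, which is definitional, and the asymptotic estimates, which are a direct reformulation of Theorem \ref{growth}. For the inequalities I would set $\psi(g) = c_g$ and $\phi(g) = |w_g|$. The upper bound $r_g(X) \le c_g$ is immediate: under the genericity hypotheses every real rational curve in the linear system is in particular a complex rational curve, so their number cannot exceed $c_g$. For the lower bound I would write $r_g(X) = n_+ + n_-$, where $n_+$ and $n_-$ count the real rational curves carrying Welschinger sign $+$ and $-$ respectively; since $w_g = n_+ - n_-$, the triangle inequality gives $|w_g| = |n_+ - n_-| \le n_+ + n_- = r_g(X)$. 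This is exactly the standard mechanism by which a signed count furnishes a lower bound for the genuine count.

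For the asymptotics I would translate each equivalence $f(n) \sim C\sqrt{n}$ coming from Theorem \ref{growth} into the form $f(n) = C\sqrt{n} + o(\sqrt{n})$, using that $f(n) \sim C\sqrt{n}$ means $f(n)/(C\sqrt{n}) \to 1$, hence $f(n) = C\sqrt{n}(1 + o(1))$. Applying this to the relation $\log c_n \sim 4\pi\sqrt{n}$ established in the proof of Theorem \ref{growth} yields $\log\psi(g) = 4\pi\sqrt{g} + o(\sqrt{g})$. For $e_\RR > 0$, part (ii) gives $\log|w_n| \sim 2\pi\sqrt{n}$, matching $4\pi\rho\sqrt{g}$ with $\rho = \frac12$. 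For $e_\RR < 0$, part (i) gives $\log w_n \sim \pi\sqrt{\frac{4(e_\CC - 3e_\RR)}{e_\CC}n} = 2\pi\sqrt{\frac{e_\CC - 3e_\RR}{e_\CC}}\sqrt{n}$, and I would verify the elementary identity $4\pi\rho = 2\pi\sqrt{\frac{e_\CC - 3e_\RR}{e_\CC}}$ for $\rho = \sqrt{\frac{e_\CC - 3e_\RR}{4e_\CC}}$, which is precisely the claimed constant.

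The only point demanding a little care is the case $e_\RR = 0$, where part (iii) controls only the even-indexed coefficients $w_{2n}$, the odd ones vanishing. Here I would feed $\log w_{2n} \sim \frac12 \log c_{2n} \sim 2\pi\sqrt{2n}$ into the same reformulation to obtain $\log\phi(2g) = 2\pi\sqrt{2g} + o(\sqrt{g})$, using that $o(\sqrt{2g})$ and $o(\sqrt{g})$ coincide. I do not anticipate any serious obstacle: the entire substance of the statement resides in Theorem \ref{growth}, and the corollary merely repackages it together with the elementary squeeze $|w_g| \le r_g(X) \le c_g$.
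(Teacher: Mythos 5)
Your proposal is correct and matches the paper's (implicit) reasoning exactly: the paper states this corollary without proof, treating it as an immediate consequence of Theorem \ref{growth} together with the definitional squeeze $\vert w_g\vert=\vert n_+-n_-\vert\le n_++n_-=r_g(X)\le c_g$, which is precisely what you spell out. Your constant verifications ($4\pi\rho=2\pi\sqrt{(e_\CC-3e_\RR)/e_\CC}$ for $e_\RR<0$, the factor $\frac12$ for $e_\RR>0$, and the even-index handling for $e_\RR=0$) are all accurate.
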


Similar abundance of real solutions phenomena are observed 
in several other real enumerative problems, see \cite{iks}, 
\cite{iks2}, \cite{BM}, \cite{kf1}, \cite{kf2}. There, like for $e_\RR\ge 0$ 
in the above Corollary, a magic factor $1/2$ occurs in quite a few
cases.

\subsection{Congruences.}
\label{congr}

The modularity of the generating series for the real and complex  
counting described in (\ref{yz-formula}) and (\ref{ryz-formula}) allows 
us to exhibit noteworthy congruences that go much further than 
$w_g\equiv c_g \pmod 2$, straightforward from definitions.

\begin{thm}
\label{congruence} 
The following congruences hold:

\begin{itemize}

\item[ i)] We have 
$w_g\equiv c_g \pmod2$ for any 
$g\geq 1,$ and $w_g\equiv c_g\equiv0\pmod2$  
for every $g$ with $g \not\equiv 0 \pmod8.$

\item[ ii)] If $e_\RR\equiv 0 \pmod 4$ 
then $w_g\equiv c_g \pmod4$ for any  $g\geq 1$, 
and if in addition $g\not\equiv 0\pmod 4$ then 
$w_g\equiv c_g\equiv0\pmod4$.

\item[ iii)] If $e_\RR\equiv 0 \pmod 8$ 
then $w_g\equiv c_g \pmod8$ for any  $g\geq 1$, 
and if in addition $g\not\equiv 0\pmod 2$ then 
$w_g\equiv c_g\equiv0\pmod8$.

\item[ iv)] If $e_\RR\equiv 0 \pmod 3$ and $g\not\equiv 0 \pmod3$ 
then $w_g\equiv c_g\equiv 0\pmod 3.$ 

\end{itemize}
\end{thm}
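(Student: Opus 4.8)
The plan is to read off all four congruences directly from the product expansions \eqref{yz-formula} and \eqref{ryz-formula}, working in the ring $\ZZ/m\ZZ[[q]]$ for the relevant modulus $m\in\{2,3,4,8\}$ and exploiting the Frobenius-type collapse of powers modulo prime powers. The central elementary fact I would use repeatedly is that for a prime $p$ one has $(1-q^r)^{p}\equiv 1-q^{pr}\pmod p$, and more generally $(1\pm q^r)^{p^k}\equiv 1\pm q^{p^k r}\pmod{p^{k+1}}$ when $p$ is odd (and the analogous statement with the extra factor-of-two subtlety for $p=2$). The strategy is to substitute these congruences into both generating functions, reduce each eta-quotient to a much sparser series modulo $m$, and then compare the supports (the sets of exponents with nonzero coefficient) of $\sum w_gq^g$ and $\sum c_gq^g$ to extract both the equality $w_g\equiv c_g$ and the vanishing statements.

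First I would treat (i). Modulo $2$ the distinction between $(1+q^r)$ and $(1-q^r)$ disappears, so both products in \eqref{ryz-formula} merge with \eqref{yz-formula}; more precisely, reducing \eqref{yz-formula} modulo $2$ and using $(1-q^s)^{24}\equiv(1-q^s)^{24}$ together with the Jacobi-type identity $\prod(1-q^s)^{24}\equiv\prod(1-q^{8s})^{3}\pmod 2$ (coming from iterating $(1-q^s)^2\equiv 1-q^{2s}$ three times on the exponent $24=8\cdot 3$), I would show $\sum c_gq^g$ modulo $2$ is supported only on exponents $g\equiv 0\pmod 8$. The same reduction applied to \eqref{ryz-formula}, where $e_\CC=24$ again supplies the factor collapsing to $(1-q^{8s})^3$, gives $w_g\equiv c_g\pmod 2$ and simultaneously the vanishing for $g\not\equiv 0\pmod 8$.

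For (ii) and (iii) I would push the $p=2$ congruence to higher powers under the hypothesis $e_\RR\equiv 0\pmod 4$ (resp.\ $\pmod 8$). Writing \eqref{ryz-formula} and isolating the difference of the two generating functions, the point is that the first product $\prod(1+q^r)^{-e_\RR}$ and the half-integer power $\prod(1-q^{2s})^{-(e_\CC-e_\RR)/2}$ differ from the complex product only through the sign in $(1+q^r)$ versus $(1-q^r)$ and through the doubling of the exponent $s\mapsto 2s$; when $e_\RR$ is divisible by $4$ (resp.\ $8$) the exponents become divisible by enough powers of $2$ that $(1\pm q^r)^{e_\RR}\equiv(1-q^r)^{e_\RR}\equiv\text{(collapsed)}\pmod{4}$ (resp.\ $\pmod 8$), forcing $w_g\equiv c_g$ to the stated modulus. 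The refined vanishing ($g\not\equiv 0\pmod 4$, resp.\ $g$ odd) I would again obtain by locating the support of the collapsed series. Part (iv) is the cleanest: modulo $3$ the identity $(1-q^r)^3\equiv 1-q^{3r}$ applied to the exponent $24=3\cdot 8$ shows $\prod(1-q^s)^{24}\equiv\prod(1-q^{3s})^{8}\pmod 3$ is supported on exponents divisible by $3$, and under $e_\RR\equiv 0\pmod 3$ the same collapse applies to every factor of \eqref{ryz-formula}, yielding $w_g\equiv c_g\equiv 0\pmod 3$ whenever $3\nmid g$.

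The main obstacle I anticipate is bookkeeping the half-integer exponent $\frac{e_\CC-e_\RR}{2}$ together with the mixed $(1+q^r)$ factors: unlike the clean prime-power collapse for the complex product, here one must verify that the sign $(1+q^r)$ versus $(1-q^r)$ genuinely becomes irrelevant to the required modulus, which depends on the precise $2$-adic valuation of $e_\RR$ and is exactly why the hypotheses $e_\RR\equiv 0\pmod 4$ and $\pmod 8$ appear in (ii) and (iii). I would handle this by treating $\prod(1+q^r)^{e_\RR}=\prod(1-q^{2r})^{e_\RR}/\prod(1-q^r)^{e_\RR}$, so that all factors are rewritten uniformly in terms of $(1-q^{\bullet})$, after which the Frobenius collapse modulo $2^k$ applies cleanly and the comparison with \eqref{yz-formula} becomes a matter of matching exponents and supports.
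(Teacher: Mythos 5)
Your proposal is correct, and it lands on exactly the same reduced series as the paper, but by a somewhat different and more elementary decomposition. The paper first passes from \eqref{ryz-formula} to the eta-quotient \eqref{eta-quotient}, so that the entire dependence on $e_\RR$ is concentrated in the factor $\bigl(1+2\sum_{n\ge1}(-1)^nq^{n^2}\bigr)^{e_\RR/2}$ via the Gauss identity \eqref{theta}; the hypotheses $e_\RR\equiv0\pmod4$ (resp.\ $\pmod8$) then annihilate this factor modulo $4$ (resp.\ $8$) by a binomial expansion, leaving $\prod_{n\ge1}(1-q^{2n})^{-12}$, which is matched against \eqref{yz-formula} by the same $2$-adic Frobenius collapse you use; the mod $3$ case is handled by cubing the theta series. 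You instead eliminate the $(1+q^r)$ factors through Euler's identity $1+q^r=(1-q^{2r})/(1-q^r)$, which turns \eqref{ryz-formula} into
\begin{equation*}
\sum_{g\ge0}w_gq^g=\prod_{r\ge1}(1-q^r)^{e_\RR}\prod_{s\ge1}(1-q^{2s})^{-12-e_\RR/2},
\end{equation*}
and then the Frobenius collapse applied to $\prod(1-q^r)^{e_\RR}$ under the divisibility hypotheses cancels the $e_\RR$-dependent exponents, again leaving $\prod(1-q^{2s})^{-12}$ modulo $4$ and $8$, $\prod(1-q^{8s})^{-3}$ modulo $2$, and a series in $q^3$ modulo $3$; the support analysis for the vanishing claims is then identical to the paper's. (For (iv) note that your collapse of the second factor needs $3\mid(12+e_\RR/2)$, i.e.\ $e_\RR\equiv0\pmod6$; this is fine because $e_\RR$ is always even for a real $K3$, but it should be said explicitly, as it is also what makes the exponent $e_\RR/2$ an integer throughout.) What your route buys is complete independence from the theta identity \eqref{theta} --- pure power-series algebra; what the paper's route buys is a framework that it reuses verbatim for the later congruences modulo $9$ and $16$, where the theta factor can no longer be made to disappear and its explicit $q$-expansion is genuinely needed.

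One caveat: your ``central elementary fact'' is overstated for odd $p$. The congruence $(1-q^r)^{p^k}\equiv1-q^{p^kr}\pmod{p^{k+1}}$ fails already for $p=3$, $k=1$, since the terms $-3q^r+3q^{2r}$ in $(1-q^r)^3$ survive modulo $9$. The correct lifting principle is: if $f\equiv g\pmod{p^j}$ then $f^p\equiv g^p\pmod{p^{j+1}}$, which yields $(1-q^r)^{2^k}\equiv(1-q^{2r})^{2^{k-1}}\pmod{2^k}$ and $(1-q^r)^3\equiv1-q^{3r}\pmod3$. Since these weaker, correct instances (moduli $2$, $4$, $8$ for $p=2$, and only modulus $3$ for $p=3$) are all your argument actually invokes, the slip does not damage the proof, but the lemma should be restated in this form.
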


\begin{proof}
As a consequence of (\ref{theta}) and (\ref{Delta}) we get 
\begin{equation}
\label{parity}
\sum_{g\geq 0} w_g q^g\equiv \prod_{n\ge 1}\frac1{(1-q^{2n})^{12}} 
\equiv \prod_{n\ge 1}\frac1{(1-q^{8n})^{3}} \pmod 2.
\end{equation}
In particular, it implies that $w_g$ is even for every $g$ 
with $g \not\equiv 0 \pmod8$.

Likewise,
\begin{equation}
\label{parity-complex}
\sum_{g\geq 0} c_g q^g=\prod_{n\ge 1}\frac1{(1-q^{n})^{24}}
\equiv \prod_{n\ge 1}\frac1{(1-q^{8n})^{3}} 
\equiv \sum_{g\geq 0} w_g q^g\pmod 2.
\end{equation}

Making  an additional assumption that $e_\RR\equiv0\pmod 4,$ 
we get in a similar way
\begin{align*}
\sum_{g\geq 0} w_g q^g\equiv &~ \left(1+e_\RR\left(
\sum_{n=1}^{\infty} (-1)^nq^{n^2}\right)\right)
\prod_{n\ge 1}\frac1{(1-q^{2n})^{12}} \pmod4\\ 
\equiv &~ \prod_{n\ge 1}\frac1{(1-q^{2n})^{12}} \pmod4 \\
\equiv &~\sum_{g\geq 0} c_g q^g \pmod4.
\end{align*}
We conclude that $w_g$ and $c_g$ are congruent modulo 
$4$ for any $g$, and that they are divisible by $4$ for each 
$g\geq 1$ with $g\not\equiv 0 \pmod4,$  as it follows from 
$$
 (1-q^{2n})^{12}\equiv (1+ 2q^{4n}+q^{8n})^{3} \pmod4.
 $$

Assuming now that  $e_\RR\equiv0\pmod 8,$ the same proof 
as in the previous case yields 
$$
\sum_{g\geq 0} w_g q^g\equiv
\prod_{n\ge 1}\frac1{(1-q^{2n})^{12}}
\equiv \sum_{g\geq 0} c_g q^g \pmod8,
$$
and so we conclude that $w_g\equiv c_g\pmod 8$ for any 
$g\geq 1.$ Moreover, since 
 $$
 (1-q^{2n})^{12}\equiv (1+ 4q^{2n}+6q^{4n}+4q^{6n}+q^{8n})^{3} \pmod8,
 $$
we notice that $w_g\equiv c_g\equiv0\pmod8$ for any $g\equiv 1\pmod 2.$

If $e_\RR\equiv0\pmod 3$, we get in a similar way 
$$
\sum_{g\geq 0} w_g q^g\equiv 
\left(1+\sum_{n=1}^{\infty} (-1)^{n+1}q^{3n^2}
\right)^{\frac{e_\RR}{6}}\prod_{n\ge 1}\frac1{(1-q^{6n})^{4}} \pmod3
$$
and
$$
\sum_{g\geq 0} c_g q^g\equiv 
\prod_{n\ge 1}\frac1{(1-q^{3n})^{8}} \pmod3.
$$
We conclude that $w_g\equiv c_g\equiv 0\pmod 3$  
for each $g\geq 1$ with $g\not\equiv 0 \pmod3$.
\end{proof}

\subsection{Sharpness}

In \cite[Sect 5.2]{kr} we observed that the lower bound for the count 
of real rational curves on primitively polarized $K3$ surfaces given 
by the absolute value of $w_g$ is sharp for any $g$ as soon as the 
surface has no real points. The reasoning is simple: there is no real 
nodal rational curve in the corresponding linear system when $g$ is odd, 
since such a curve would have at least one real point among its singular  
points, which is impossible; while when $g$ is even, the singular points of 
such a curve split into pairs of conjugate ones, and therefore the curve 
counts with positive Welschinger sign. Here we prove that the lower bound 
is optimal in a few more cases.

\begin{thm}
\label{sharp}
For any $g$, the lower bound on the number of real solutions by the 
absolute value of $w_g$ is sharp for surfaces whose real locus is a 
torus or a pair of tori. 
\end{thm}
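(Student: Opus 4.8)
The plan is to establish sharpness by explicitly constructing, for a generic real $K3$ surface whose real locus is a single torus or a pair of tori, a real polarized family in which precisely $|w_g|$ of the complex rational curves are real, with every real curve contributing the same Welschinger sign. By Theorem \ref{positive-monotone} the invariant $|w_g|$ is the conjectural minimum, so it suffices to exhibit one surface realizing it. The key is to control the real structure on the set of rational curves, which is finite and consists of nodal curves by Chen's theorem; sharpness amounts to forcing all $|w_g|$ real curves to carry the same sign while the remaining $c_g-|w_g|$ complex curves pair up into complex-conjugate non-real pairs.

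First I would recall that a real $K3$ surface whose real locus is a torus has $e_\RR=0$, and a pair of tori gives $e_\RR=0$ as well since each torus contributes Euler characteristic zero. Thus both cases fall under the $e_\RR=0$ regime of formula (\ref{ryz-formula}), where by Theorem \ref{positive-monotone} the invariants $w_g$ vanish for odd $g$ and are positive for even $g$. For odd $g$ the bound $|w_g|=0$ is automatically sharp: any real rational nodal curve in a complete linear system of odd genus would have an odd number of nodes distributed among real nodes and conjugate pairs, forcing at least one real node, which must lie on $X_\RR$; but one can arrange the geometry so that no such real curve occurs, or more simply invoke the parity obstruction exactly as in the no-real-points case quoted from \cite[Sect 5.2]{kr}. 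For even $g$ the task is to produce a surface on which every real rational curve counts with the same sign, so that the signed count $w_g$ equals the actual number of real curves in absolute value.

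The main tool I would use is Kulikov degenerations of type I and II, as signaled in the introduction, to reduce the count on the $K3$ surface to a tropical or combinatorial count on the central fiber. The strategy is to degenerate the real $K3$ surface to a chain or cycle of rational surfaces meeting along elliptic curves, so that rational curves in the linear system degenerate to configurations whose real structure and node behavior can be read off combinatorially. On such a central fiber the solitary (isolated) real double points — the ones carrying the Welschinger sign — can be counted directly, and one shows that for the torus and pair-of-tori real loci every surviving real limit curve has an even number of solitary points, hence positive Welschinger sign. This matches the sign pattern predicted by (\ref{ryz-formula}) for $e_\RR=0$, and a degeneration-invariance argument transports the sharpness back to the generic $K3$ surface.

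The hard part will be the degeneration analysis itself: verifying that the chosen Kulikov model is compatible with the real structure, that the relevant rational curves deform without obstruction across the family, and above all that the real limit curves genuinely have the parity of solitary nodes forced by $e_\RR=0$. In particular one must rule out real curves with an odd number of solitary points appearing in the limit, since such a curve would contribute with negative sign and spoil the equality $r_g(X)=|w_g|$. I expect this sign-counting on the central fiber, together with a careful tracking of which complex curves remain real versus split into conjugate pairs, to be the crux; the surrounding reductions to the $e_\RR=0$ case and the invocation of the real Yau-Zaslow formula (\ref{ryz-formula}) are comparatively routine.
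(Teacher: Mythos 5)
Your reduction to the $e_\RR=0$ case and the appeal to Theorem \ref{positive-monotone} are correct and match the paper, but the proposal has a genuine gap exactly where the paper's proof does its work: the odd-$g$ case is not ``automatically sharp.'' The parity obstruction of \cite[Sect 5.2]{kr} is specific to surfaces with $X_\RR=\emptyset$: there, the odd number of nodes forces a real node, which is a real point of $X$ and hence impossible. When $X_\RR$ is a torus or a pair of tori the surface has plenty of real points, so a real node is not a priori excluded, and your fallback ``one can arrange the geometry so that no such real curve occurs'' is precisely the statement to be proved, not a routine step. The paper's constructions here are substantial: for a pair of tori it realizes $X$ as a small real perturbation of a double cover of the hyperboloid branched along a real-point-free $(4,4)$ curve, so that $X_\RR$ consists of two tori \emph{non-contractible} in $\PP^{2n+1}_\RR$; then every real hyperplane section meets both tori in one-dimensional pieces, while the real locus of a real nodal rational curve has at most one one-dimensional component, a contradiction. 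For a single torus and odd $g$ the paper needs a type II Kulikov degeneration to $\Sigma_0\cup\Sigma_2$ glued along an elliptic curve $E$ with no real points ($\Sigma_2$ with empty real locus, built via \cite{ci-mi}), plus a compactness argument in the Kontsevich space: a real connected genus-zero stable map would have to contain real components on both scrolls joined through points of $E$, which come only in conjugate pairs. None of these mechanisms appears in your sketch, and no parity argument can substitute for them.

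For even $g$ your plan is in the right spirit (degenerate and control signs) but explicitly defers the crux --- you say the sign-counting on the central fiber is ``expected'' to work --- whereas the paper supplies the concrete mechanism that makes it work: one degenerates to a double cover of $\PP^2(1)$ branched along the proper transform of a one-nodal sextic whose node is a \emph{solitary} real point at the blow-up center, chosen so that no real tangent of the sextic passes through the node. The induced real elliptic fibration then has only imaginary singular fibers, and by \cite[Proposition 4.1]{BL} every rational curve in the relevant class is a perturbation of the section plus fiber components; consequently every real rational curve has \emph{no} real singular points at all (stronger than your ``even number of solitary points''), so each contributes $+1$ and the actual count equals $w_g$ on a nearby generic perturbation (generic in the sense of Chen's theorem \cite{chen}). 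Without producing a specific central fiber compatible with the real structure and verifying this sign statement, the proposal does not establish sharpness in either parity; it is an outline of difficulties rather than a proof.
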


\begin{proof}
First, we treat the case of a pair of tori and $g$ odd. Let 
$\pi: Y\to \PP^1\times \PP^1$ be the double covering ramified 
along a real nonsingular curve of bi-degree $(4,4)$ without 
real points ({\it cf.,} \cite{vi,ns}), where $\PP^1\times \PP^1$ 
is considered with the standard product real structure (i.e., the 
hyperboloid). Such a double covering carries two real structures 
that differ by the automorphism of the covering, and we pick the 
one for which the real locus is formed by two tori. We denote by 
$F_1$ and  $F_2$ the generators of $\PP^1\times \PP^1.$ The 
linear system  $\vert F_1+n F_2\vert $ embeds $\PP^1\times \PP^1$ 
into $\PP^{2n+1},$ while its pull-back $\vert \pi^*(F_1+n F_2)\vert$ 
provides a representation of $Y$ as a hyperelliptic $K3$ surface 
in $\PP^{2n+1}.$ In such a representation the pull-back of hyperplane 
sections form a complete $2n+1$ dimensional linear system of curves 
of genus $g=2n+1.$ Finally, we take as $X$ an embedding into 
$\PP^{2n+1}$ of a real $K3$ surface obtained by a small generic 
real perturbation of $Y$ (its existence follows from the period 
space description, see for example \cite{ns}). Notice now that 
$Y_\RR$ consists of a pair of disjoint tori which are non-contractible 
in $\PP^{2n+1}_\mathbb R$ since the real locus of the starting 
ruled surface is non-contractible in $\PP^{2n+1}_\mathbb R.$
This implies that $X_\RR$ consists of a pair of disjoint, non-contractible 
tori as well. Hence, every real hyperplane section of $X_\RR$  has at 
least $2$ components, and thus can not be rational. This proves the 
sharpness claim, since according to formula (\ref{ryz-formula})
we have $w_g=0$ for odd values of $g,$ as soon as $e_\mathbb R$ is zero.

\smallskip

To treat the case of a pair of tori and $g$ even, we consider a 
degeneration of a $K3$ to a double covering of the blown up 
projective plane. Namely, we start from $\PP^2(1)$, the projective 
plane blown up at a real point, and consider a double covering 
$\varpi: Y\to \PP^2(1)$ ramified along the proper transform 
of a one-nodal sextic with a real solitary point at the center 
of the blow-up and two ovals surrounding this solitary point. 
The standard real structure on $\PP^2(1)$ lifts to two real 
structures on $Y$, and we pick the one for which the real locus 
consists of two tori. We embed now $\PP^2(1)$ into $\PP^{2n}$ 
by the linear system $\vert E+n F\vert $, where $E$ is the exceptional 
divisor and $F$ stands for the straight lines through the center of the 
blow-up. As above, we take $X\subset \PP^{2n}$  to be a  generic small 
real perturbation of $Y.$ The $K3$ surface $Y$ carries a natural real 
elliptic fibration given by the pull-back of the pencil of lines through the 
center of the blow-up. Since the starting sextic admits no real tangents 
passing through the node, all the singular fibers of this elliptic fibration 
are imaginary. This implies that every real rational hyperplane section 
of $X$, which as is known ({\it cf.,} \cite[Proposition 4.1]{BL}) is a 
perturbation of the section, $ \varpi^{-1}(E)$, and a  collection of, 
possibly multiple, singular fibers, has no real singular points. Therefore, 
all the inputs into $w_g$ in such a construction are positive, wherefrom 
the sharpness for this other particular case: $X_\RR$ is a pair of tori 
and $g$ is even, $g=2n.$ Notice that in this case $w_g>0$ and it grows 
fast, as discussed in Theorems \ref{positive-monotone} and 
\ref{growth}, respectively.

\smallskip

In the case of one torus and $g$ even, we start again from $\PP^2(1),$ 
the projective plane blown up at a real point, and consider a double covering 
$\varpi: Y\to \PP^2(1)$ ramified in a proper transform of a one-nodal sextic 
with a real solitary point at the center of the blow-up and, this time, no other 
real points. As a real structure on $Y$, we select that lift of the standard real  
structure on $\PP^2(1)$ for which the real locus of $Y$ is a torus. We embed 
$\PP^2(1)$ into $\PP^{2n}$ by the linear system $\vert E+n F\vert $, where 
$E$ is the exceptional divisor and $F$ stands for the linear system of lines 
through the center of the blow-up. As above, we can assume that all the 
singular fibers of the associated elliptic fibration are imaginary. For that it is 
sufficient to make our sextic generic, since a generic nodal sextic has no double 
tangents passing through the node. Hence, we can argue as in the previous case, 
that is to use the pull-back $\vert \varpi^*(E+n F)\vert $ to represent the double 
covering under consideration by a hyperelliptic $K3$ surface in $\PP^{2n},$ and 
then to take as $X$ a generic small real perturbation of $Y$. Once more all the 
inputs in such a construction are positive.  Wherefrom the sharpness for this 
other particular case:  $X_\RR$ is a torus and $g$ is even, $g=2n.$ Notice again 
that in this case again $w_g>0$ and it grows fast.

\smallskip

To construct  sharp examples for the remaining case, a $K3$ surface 
in $\PP^{2n+1}$ with a torus as the real locus, we start from a suitable 
type II degeneration of such a $K3$ surface \cite{ku}. Namely, we start, 
as in the case of a pair of tori,  from the same real rational ruled surface 
$\PP^1\times \PP^1=\Sigma_0\subset \PP^{2n+1}$ and consider an elliptic 
normal curve $E$  without real points that is cut on this ruled surface 
by a real rational ruled surface $\Sigma_2$ having empty real locus and 
intersecting $\Sigma_0$ transversally along $E$ with $E^2=-2$ on this 
$\Sigma_2.$ To construct an explicit example, {\it cf.,} 
\cite[Lemma 1, page 643]{ci-mi}, one can start from the elliptic curve 
$\mathbb C^2/(\mathbb Z+i\mathbb Z)$ equipped with the real structure 
$z\mapsto \bar z + \frac12$, embed it into $\PP^{2n+1}$ by means of the 
linear system $n(A+B)+(\frac12+A)+(\frac12+B)\sim (n+1)(A+B)$
taking $A=0, B=\frac{i}2$, and choose as a scroll $\Sigma_0=\PP^1\times \PP^1$
containing $E,$ the scroll corresponding to the hyperelliptic involution defined 
by the divisor $A+C$, $C=\frac12.$ Under such choices the scroll $\Sigma_0$ 
becomes real and has a torus as its real part, while the scroll corresponding 
to the divisor $A+B$ gives us a real $\Sigma_2$ with an empty real part, 
as required. After that, there remains to pick as $X$ a small generic real 
perturbation of $\Sigma_0\cup\Sigma_2.$ The existence of a smooth such 
surface $X$ is guaranteed by \cite[ Theorem 1, page 644]{ci-mi}. The generic 
such surface $X$ is primitively polarized \cite[Theorem 2, page 646]{ci-mi}, 
and, as always due to the surjectivity of the period map, it can be deformed 
further to a surface generic in the sense of Chen's theorem \cite{chen}, 
which guarantees that the rational curves in the primitive polarization are 
all nodal.

Under such a choice, $X$ does not have  any real rational hyperplane section. 
Indeed, if such a section exists, then by the compactness of Kontsevich's space 
of stable curves, there would exist a real projective connected nodal curve $Z$ 
of arithmetic genus $0$ and a real regular map $f: Z\to \Sigma_0\cup\Sigma_2$ 
that realizes a hyperplane section of $\Sigma_0\cup\Sigma_2.$ On the other hand, 
the hyperplane sections of $\Sigma_0$ form the linear system $\vert F_1+n F_2\vert,$ 
where $F_1,F_2$ are generators of $\PP^1\times \PP^1=\Sigma_0,$ while the 
hyperplane sections of $\Sigma_2$ form the linear system $\vert F_1'+(n+1)F_2'|,$ 
where $F_1'$ stands for  the $(-2)$-section and $F_2'$ for the generator of $\Sigma_2.$ 
The standard generators $F_1, F_2$ and the $(-2)$-section are complex conjugation 
invariant. Hence, every real rational map $f: Z\to \Sigma_0\cup\Sigma_2$ as above, 
which represents a hyperplane section of $\Sigma_0\cup\Sigma_2,$ should have in 
its source, $Z$, an irreducible real component of type $F_1+aF_2$ and an irreducible 
real component of type $F'_1+aF'_2$. Since $Z$ is rational and connected, each two 
real components should be connected by a chain of real components. This is impossible 
for real components belonging one to $\Sigma_0$ and another to $\Sigma_2$, since any 
two such components intersect only at pairs of complex conjugated points. Such a 
contradiction ends the proof.
\end{proof}

\section{Concluding remarks}

\subsection{On asymptotics}

Hardy and Ramanujan have obtained not only an asymptotic approximation
for coefficients $P(n), Q(n)$ (see formulas (\ref{euler}), (\ref{hardy})), 
but also a full asymptotic expansion, which later was even improved by 
Rademacher up to a convergent series expression. These can be applied to 
get similar expansions for $w_g.$ We have restricted ourselves here to 
asymptotic approximations, since for our aim, to demonstrate the abundance 
phenomenon, it is not necessary to go further. Moreover, it would only 
obscure the presentation by a much heavier and lengthy analysis.

\subsection{On congruences modulo $2$}

Table \ref{defaulttable} below that gives the values of  $w_g$ and $c_g$ 
for $g\le 20$ shows that these values are odd if $g=8$ and $16$.
Thus, it may give the impression that they should be odd for all $g\equiv0\pmod 8$ 
({\it cf.,} Section \ref{congr}). In fact, the situation is much different.

Let $\{i_n\}_{n\geq 0}$ be the parity sequence given by 
$i_n\equiv w_{8n}\equiv c_{8n}\pmod 2,\,n\geq 0.$

\begin{prop} 
The sequence $\{i_n\}_{n\geq 0}$ contains infinitely many 
$0$'s and infinitely many $1$'s.
\end{prop}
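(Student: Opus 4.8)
The plan is to reduce the parity of $w_{8n}$ and $c_{8n}$ to a statement about the coefficients of a single, simpler power series modulo $2$, and then to analyze that series using the theory of modular forms and their behavior under reduction. From the congruence (\ref{parity}) established in Theorem \ref{congruence}, we have
\begin{equation*}
\sum_{g\geq 0} w_g q^g\equiv \prod_{n\ge 1}\frac1{(1-q^{8n})^{3}} \pmod 2,
\end{equation*}
so $i_n\equiv w_{8n}\pmod 2$ equals the coefficient of $q^{8n}$ in $\prod_{m\ge 1}(1-q^{8m})^{-3}$, which is the coefficient of $Q^{n}$ in $f(Q):=\prod_{m\ge 1}(1-Q^{m})^{-3}\pmod 2$ after substituting $Q=q^{8}$. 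Thus the Proposition is equivalent to showing that the mod $2$ reduction of the power series $\prod_{m\ge 1}(1-Q^m)^{-3}$ has infinitely many zero coefficients and infinitely many nonzero coefficients.

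First I would dispose of the easier half. The nonvanishing of infinitely many coefficients is immediate from Theorem \ref{positive-monotone} together with Theorem \ref{congruence}: were only finitely many $i_n$ equal to $1$, the series $f(Q)\bmod 2$ would be a polynomial, which is absurd since $f$ has strictly positive integer coefficients that, by the growth estimate in Theorem \ref{growth}, tend to infinity, and one checks directly that they cannot all eventually be even (indeed $c_8$ and $c_{16}$ are already odd, and a polynomial mod $2$ cannot capture the partition-type growth). So the real content is the existence of infinitely many $n$ with $i_n=0$, i.e. infinitely many even coefficients.

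The key step is to recognize $\prod_{m\ge 1}(1-Q^m)^{-3}$ modulo $2$ in terms of a theta series. Working mod $2$ we have $(1-Q^m)^{-3}\equiv (1-Q^m)^{-3}(1-Q^m)^{4}=(1-Q^m)\equiv (1+Q^m)\pmod 2$ is too crude; instead the efficient route is to use $\prod(1-Q^m)^{-3}\equiv \prod(1-Q^m)^{-3}\cdot\prod(1-Q^m)^{8}=\prod(1-Q^m)^{5}$ is likewise unhelpful, so I would instead multiply by an even power to reach a known modular form. Concretely, since $(1-Q^m)^{2}\equiv 1-Q^{2m}\pmod 2$, one gets $\prod(1-Q^m)^{3}\equiv \prod(1-Q^m)\prod(1-Q^{2m})\pmod 2$, and $\prod(1-Q^m)=\sum_k (-1)^k Q^{k(3k-1)/2}$ by Euler's pentagonal number theorem, which mod $2$ is a sum of distinct monomials supported on pentagonal numbers. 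Inverting, the plan is to express $f(Q)\bmod 2$ via Jacobi's identity $\prod(1-Q^m)^{3}=\sum_{k\ge 0}(-1)^k(2k+1)Q^{k(k+1)/2}$, whose mod $2$ reduction is $\sum_{k\ge 0}Q^{k(k+1)/2}$, a theta series supported exactly on the triangular numbers. Thus $\prod(1-Q^m)^{3}\equiv \sum_{k\ge 0}Q^{\binom{k+1}{2}}\pmod 2$, and the generating function for $i_n$ is the formal inverse mod $2$ of this lacunary theta series.

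The hard part will be extracting, from the reciprocal of the triangular-number theta series $\theta(Q):=\sum_{k\ge 0}Q^{k(k+1)/2}\pmod 2$, the conclusion that its inverse has infinitely many vanishing coefficients. I would handle this by exploiting the lacunarity and the multiplicative/Hecke structure available modulo $2$: the theta series $\theta(Q)$ is (up to the standard rescaling) the mod $2$ reduction of a weight $3/2$ theta function, and its support on triangular numbers has density tending to zero, so its reciprocal cannot have all but finitely many coefficients equal to $1$ without forcing an impossible convolution identity $\theta(Q)\cdot(\sum i_n Q^n)\equiv 1$ on long runs of consecutive exponents. The cleanest argument I anticipate is a contradiction one: if $i_n=1$ for all large $n$, then for large $N$ the coefficient of $Q^{N}$ in the product $\theta(Q)\cdot\sum_{n} i_n Q^{n}$ would count, mod $2$, the number of ways to write $N-n$ as a triangular number with $i_n=1$, i.e. essentially the number of triangular numbers below $N$, which grows like $\sqrt{N}$ and hence takes both parities infinitely often — contradicting that this coefficient must be $0$ for all $N\ge 1$. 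Making this counting estimate rigorous, with careful control of the boundary terms where $i_n$ might be $0$, is the main obstacle, and I would expect to need the precise distribution of triangular numbers modulo small powers of $2$ to pin down the parity of that count along a suitable arithmetic progression of $N$.
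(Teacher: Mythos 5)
The half of your argument claiming infinitely many $1$'s has a genuine gap. From ``only finitely many $i_n=1$'' you conclude absurdity because the integer coefficients of $f(Q)=\prod_{m\ge1}(1-Q^m)^{-3}$ grow like partition numbers; but the size of integer coefficients carries no information whatsoever about their parity --- a series with partition-type growth can perfectly well have all but finitely many coefficients even, and your only concrete evidence (that $c_8$ and $c_{16}$ are odd) yields two odd values, not infinitely many. The fix is available from the identity you yourself derive: mod $2$, $f(Q)\cdot\theta(Q)\equiv 1$, where $\theta(Q)=\sum_{k\ge 0}Q^{k(k+1)/2}$ is the mod-$2$ reduction of Jacobi's cube identity. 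If $f \bmod 2$ were a nonzero polynomial $P$, then $\theta\equiv 1/P$ would be a rational power series over $\mathbb{F}_2$; its coefficients then satisfy a linear recurrence and are eventually periodic, so its nonzero coefficients would occur with bounded gaps --- contradicting the quadratically growing gaps between triangular numbers. This is in substance what the paper does, except that instead of $f\theta\equiv1$ it uses the Frobenius-generated self-similar relation $\sum i_nq^{8n}\equiv \bigl(\prod_{n\ge1}(1-q^{8n})^{3}\bigr)\bigl(\sum i_nq^{16n}\bigr) \pmod 2$, and it invokes the quadratic gap growth of the theta factor at exactly this point.

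Your other half --- infinitely many $0$'s --- is strategically sound and close in spirit to the paper's: both hinge on Jacobi's identity reducing the cube product mod $2$ to the triangular-number theta series, followed by a parity count. You substantially overestimate the remaining difficulty: no information about the distribution of triangular numbers modulo powers of $2$ is needed. Suppose $i_n=1$ for all $n\ge C$. Since consecutive triangular numbers $T_m=m(m+1)/2$ have gap $m+1\to\infty$, for all $m\ge C$ the interval $[T_m+C,\,T_{m+1})$ is nonempty, and for any $N$ in it every $k$ with $T_k\le N$ satisfies $N-T_k\ge C$, so the coefficient of $Q^N$ in $f\theta$ is $\#\{k:T_k\le N\}=m+1 \pmod 2$; since $f\theta\equiv1$ forces this to vanish for every $N\ge1$, the integer $m+1$ would be even for all large $m$, a contradiction. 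The paper's version of this step instead evaluates its recursion at the exponent pairs $4(4k+1)(4k+2)$ and $4(4k+2)(4k+3)$ (that is, $8T_{4k+1}$ and $8T_{4k+2}$) and observes that the resulting parities disagree; your direct use of $f\theta\equiv1$ is, if anything, slightly cleaner once the boundary remark above is made. In sum: the second half needs only this short finishing argument, but the first half as written is wrong and must be replaced, e.g., by the rationality/periodicity argument sketched above.
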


\begin{proof} As noticed in (\ref{parity}), the following identity holds 
$$
\sum_{n\geq 0}  i_nq^{8n}\equiv \sum_{g\geq 0}  w_gq^g 
\equiv \prod_{n\geq1}\frac1{(1-q^{8n})^{3}} \pmod2.
$$
Moreover, we have
$$
\prod_{n\geq1}\frac1{(1-q^{8n})^{3}}
\equiv \prod_{n\geq1}\frac{(1-q^{16n})^{2}}{(1-q^{8n})}
\prod_{n\geq1}\frac1{(1-q^{16n})^{3}} \pmod2.
$$
Since  
$$
\prod_{n\geq1}\frac1{(1-q^{16n})^{3}} \equiv \prod_{n\geq1}\frac1{(1-q^{n})^{48}} 
\equiv \left(\sum_{n\geq 0}  i_nq^{8n}\right)^2\pmod2,
$$
we obtain
\begin{equation}
\label{recursion}
\sum_{n\geq 0}  i_nq^{8n}\equiv \prod_{n\geq1}\frac{(1-q^{16n})^{2}}{(1-q^{8n})}
\left(\sum_{n\geq 0}  i_nq^{16n}\right)\pmod 2.
\end{equation}

Furthermore, from the Jacobi identity \cite[Corollary 1.4]{koh}
\begin{equation*}
\label{Jacobi}
\prod_{n\geq 1}\frac{(1-q^{16n})^{2}}{(1-q^{8n})} 
\equiv \prod_{n\geq 1} (1-q^{8n})^3
\equiv  \sum_{n=0}^\infty q^{4n(n+1)} \pmod 2
\end{equation*}
we notice that the power series development of the term 
$\prod_{n\geq 1}\frac{(1-q^{16n})^{2}}{(1-q^{8n})}$ in (\ref{recursion})
contains infinitely many odd and infinitely many even coefficients and the 
gaps between odd coefficients are growing quadratically.

Suppose  now that the sequence $\{i_n\}_{n\geq 0}$ contains finitely many $1$'s.
That would imply (in arithmetics over $\mathbb Z/2$) the equality between a 
non-zero polynomial and a product of an infinite series with a polynomial, which is 
impossible.  Furthermore, if  we suppose now that the sequence $\{i_n\}_{n\geq 0}$ 
contains finitely many $0$'s, to get a contradiction, it is sufficient to write 
$\sum_{n\geq 0}  i_nq^{8n}$ as a sum of a polynomial with $\sum_{n\ge C}q^{8n},$ 
and to observe that the coefficients of $\sum_{n\geq 0}  i_nq^{8n}$ in powers 
$4(4k+1)(4k+2)$ and $4(4k+2)(4k+3)$ have opposite parities for all $k$ sufficiently 
large with respect to $C,$ which is impossible.
\end{proof}

\subsection{Other congruences}

In Section \ref{congr}, we discussed congruences between 
the real and complex invariants, modulo primes and their powers 
that divide $24,$ i.e., congruences modulo $2, 3, 4,$ and $8.$ 
In fact, when the powers of $2$ and $3$ do not divide $24,$ 
interesting congruences are still expected to occur. We discuss 
below some congruences modulo $9$ and $16.$

\begin{prop}
The following congruences hold:
\begin{itemize}
\item[ i)] If $e_\RR\equiv 0\pmod 9$ then 
$w_g\equiv c_g\equiv 0 \pmod9$ for all $g\equiv 4\pmod 6.$
\item[ ii)] If $e_\RR\equiv 0\pmod{16}$ then 
$w_g\equiv c_g\equiv 0 \pmod{16}$ for all 
odd $g>1.$
\end{itemize}
\end{prop}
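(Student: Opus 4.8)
The plan is to reduce both parts to the product expansions already available: from (\ref{eta-quotient})--(\ref{Delta}) one has $\sum_g w_gq^g=\prod_{n\ge1}(1-q^{2n})^{-12}\,\theta^{e_\RR/2}$, where $\theta:=\eta^2(z)/\eta(2z)=\sum_{n\in\ZZ}(-1)^nq^{n^2}$ by (\ref{theta}), while $\sum_g c_gq^g=\prod_{n\ge1}(1-q^n)^{-24}$ (equivalently the same eta-quotient at the formal value $e_\RR=-24$, i.e. $\prod(1-q^{2n})^{-12}\theta^{-12}$). First I would isolate one reusable fact: \emph{$[q^m]\prod_{n\ge1}(1-q^n)^{-12}\equiv0\pmod9$ whenever $m\equiv2\pmod3$.} Writing $\prod(1-q^n)^3=Q+3A$ with $Q=\prod(1-q^{3n})$ and $A=\tfrac13(\prod(1-q^n)^3-Q)\in\ZZ[[q]]$, one gets $\prod(1-q^n)^{-12}=(Q+3A)^{-4}\equiv Q^{-4}-12\,AQ^{-5}\pmod9$ (the quadratic error carries $9\mid(3A)^2$). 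Now $Q^{-4},Q^{-5}$ live on multiples of $3$, and by the Jacobi identity $\prod(1-q^n)^3=\sum_{k\ge0}(-1)^k(2k+1)q^{T_k}$, $T_k=k(k+1)/2$, the series $A$ is supported on triangular numbers $T_k$, none of which is $\equiv2\pmod3$; so both terms vanish in degrees $\equiv2\pmod3$. This is the engine.

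For part (i) and the $w_g$, note $e_\RR\equiv0\pmod9$ with $e_\RR$ even forces $e_\RR/2\in\{0,\pm9\}$. The point I would make is that $\theta^{e_\RR/2}$ is, modulo $9$, supported on multiples of $3$: from $\theta^3\equiv\theta_3\pmod3$ (with $\theta_3=\sum_n(-1)^nq^{3n^2}$) write $\theta^3=\theta_3+3B$, so $\theta^9=(\theta_3+3B)^3\equiv\theta_3^3\pmod9$ and likewise $\theta^{-9}\equiv\theta_3^{-3}$. Convolving such a series with the even series $\prod(1-q^{2n})^{-12}$, a degree $g\equiv4\pmod6$ receives contributions only from degrees $q^{2m}$ of the even factor with $2m\equiv4\pmod6$, i.e. $m\equiv2\pmod3$; the Lemma kills each, giving $w_g\equiv0\pmod9$.

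The numbers $c_g$ correspond to $\theta$-exponent $-12$, which is \emph{not} a multiple of $9$, so a genuine correction survives and this is the step I expect to be the crux. I would work directly on $\prod(1-q^n)^{-24}=(Q+3A)^{-8}\equiv Q^{-8}-24\,AQ^{-9}\pmod9$. For $g\equiv1\pmod3$ the term $Q^{-8}$ contributes nothing, so $c_g\equiv3\,[q^g](AQ^{-9})\pmod9$ and it remains to prove $[q^g](AQ^{-9})\equiv0\pmod3$. Reducing $Q^{-9}\equiv\prod(1-q^{9n})^{-3}\pmod3$ shows that only degrees $\equiv1\pmod9$ receive any contribution, since the relevant part of $A$ sits in degrees $T_{3j'+1}=9T_{j'}+1$ with coefficient $(-1)^{j'+1}(2j'+1)$; writing $g=9G+1$ the sum becomes $\sum_{j'}(-1)^{j'+1}(2j'+1)\,\rho_{G-T_{j'}}$ with $\rho_\ell=[y^\ell]\prod(1-y^n)^{-3}$. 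The hard part is the miracle at the end: by Jacobi this equals $[y^G]\bigl(-\prod(1-y^n)^3\cdot\prod(1-y^n)^{-3}\bigr)=-[y^G]1=0$ for $G\ge1$. Hence $c_g\equiv0\pmod9$ for every $g\equiv1\pmod3$, $g>1$, in particular for $g\equiv4\pmod6$.

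For part (ii) I would run the same scheme modulo $16$. A binomial expansion gives $\theta^8\equiv1\pmod{16}$ (every term $\binom8j2^jS^j$ with $j\ge1$, $S=\sum_{n\ge1}(-1)^nq^{n^2}$, is divisible by $16$), so for $e_\RR\equiv0\pmod{16}$ one gets $\sum w_gq^g\equiv\prod(1-q^{2n})^{-12}\pmod{16}$, an even series, whence $w_g\equiv0\pmod{16}$ for all odd $g$. For $c_g$, $\theta^8\equiv1$ yields $\theta^{-12}\equiv\theta^4\pmod{16}$; since $\theta=\vartheta_3(-q)$, Jacobi's four-square theorem gives $[q^N]\theta^4=-8\sigma(N)$ for odd $N$, so $c_g\equiv8\sum_{N\ \mathrm{odd}}\sigma(N)\,d_{(g-N)/2}\pmod{16}$ with $d_m=[x^m]\prod(1-x^n)^{-12}$. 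Using $\sigma(N)\equiv[\,N\ \text{a square}\,]\pmod2$ and, mod $2$, $\prod(1-x^n)^{-12}\equiv\prod(1-x^{4n})^{-1}(1-x^{8n})^{-1}$, the residues $g\equiv3,5,7\pmod8$ make every $d$ vanish, while for $g=8h+1$ the sum collapses to $[y^h]\tfrac{\prod(1-y^{2n})}{\prod(1-y^n)^2}$, which is $\equiv1\pmod2$ because $\prod(1-y^n)^2\equiv\prod(1-y^{2n})\pmod2$. Thus $c_g\equiv0\pmod{16}$ for odd $g>1$ (and $c_1=24\equiv8$ explains the exclusion of $g=1$). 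The recurring obstacle in both parts is precisely engineering the two self-reciprocal cancellations $-\prod(1-y^n)^3\cdot\prod(1-y^n)^{-3}\equiv-1\pmod3$ and $\prod(1-y^{2n})/\prod(1-y^n)^2\equiv1\pmod2$, which are what turn the surviving corrections into zero.
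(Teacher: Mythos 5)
Your proposal is correct, and while your treatment of the $w_g$ coincides with the paper's, your handling of the $c_g$ takes a genuinely different and more self-contained route. For $w_g$ you reproduce the paper's mechanism exactly: your ``engine lemma'' is the $k=-4$ case of the paper's decomposition $\prod_{n\geq1}(1-q^n)^{3k}=A_k(q^3)+3qB_k(q^3)+9q^2C_k(q^3)$ derived from the Jacobi identity, and your reductions $\theta^{\pm9}\equiv\theta_3^{\pm3}\pmod9$ and $\theta^{\pm8}\equiv1\pmod{16}$ are precisely the paper's $E_\ell(q^3)$ step and its mod-$16$ analogue. For $c_g$, however, the paper's proof proper only records (via $k=-8$) that $c_g\equiv0\pmod9$ for $g\equiv2\pmod3$, which does \emph{not} cover $g\equiv4\pmod6$ (these are $\equiv1\pmod3$); both $c_g$ claims of the proposition are actually settled in the paper by passing to Klein's $j$-invariant via $\sum c_gq^g\equiv qj(q)$ modulo $16$ (and modulo $9$, in the closing remark) and invoking Lehner's congruences $a(2k)\equiv0\pmod{2^{11}}$, $a(3k)\equiv0\pmod{3^5}$. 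You instead isolate the surviving correction terms and annihilate them through the exact cancellations $\bigl(\sum_{j\geq0}(-1)^{j+1}(2j+1)y^{T_j}\bigr)\cdot\prod_{n\geq1}(1-y^n)^{-3}=-1$ and, modulo $2$, $\bigl(\sum_{t\geq0}y^{T_t}\bigr)\cdot\prod_{n\geq1}(1-y^n)^{-1}(1-y^{2n})^{-1}\equiv1$, using nothing beyond Jacobi's cube and four-square identities; I verified both computations and they are sound (for instance your boundary terms reproduce $c_1=24\equiv6\pmod9$ and $\equiv8\pmod{16}$, explaining the exclusion of $g=1$). What the paper's route buys is brevity given the classical inputs; what yours buys is an elementary proof of the stronger statements $c_g\equiv0\pmod9$ for all $g\equiv1\pmod3$, $g>1$, and $c_g\equiv0\pmod{16}$ for all odd $g>1$, with no appeal to Lehner. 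Two harmless slips to fix in writing it up: your $A=\tfrac13\bigl(\prod_{n\geq1}(1-q^n)^3-Q\bigr)$ is supported on $\{T_k\}\cup3\ZZ_{\geq0}$ (the subtracted $Q$ lives on multiples of $3$), not on triangular numbers alone, though neither set meets $2\pmod3$, so the lemma stands; and in part (ii) the collapsed sum equals $[y^h]\bigl(\prod_{n\geq1}(1-y^n)^2/\prod_{n\geq1}(1-y^{2n})\bigr)$ --- a \emph{series} congruent to $1\pmod2$, whose $y^h$-coefficient therefore vanishes mod $2$ for $h\geq1$ (you wrote the reciprocal and said the coefficient ``is $\equiv1$''; modulo $2$ the inversion is immaterial, and the intended conclusion is the vanishing).
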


\begin{proof} 
We assume first that $e_\RR$ is divisible by $9$  
(it happens with $e_\RR=-18, 0,$ and $18$), and 
discuss the congruence of the two invariants modulo $9.$

As Guo-Niu Han kindly pointed to us, the Jacobi  identity 
\cite[Corollary 1.4]{koh} shows that
$$
\prod_{n\geq 1} {(1-q^n)}^3 =A(q^3) + 3 q B(q^3),
$$
where  $A, B \in \ZZ[[q]],\,A(0)=1,$ which implies that for any integer $k,$ 
\begin{equation}
\label{han-relation}
\prod_{n\geq 1} {(1-q^n)}^{3k}=A_k(q^3) + 3 q B_k(q^3) + 9 q^2 C_k(q^3),
\end{equation} 
for some power series $A_k, B_k, C_k\in \ZZ[[q]],\,A_k(0)=1.$ 
In particular, when $k=-8$, it makes evident the congruence
$c_g\equiv 0\pmod 9$ for all $g\equiv 2\pmod 3.$

To prove the claim for the real invariant, observe first that 
$$
\left(1+2\sum_{n\ge 1} (-1)^nq^{n^2}\right)^9\equiv E(q^3) \pmod 9,
$$
where $E\in \ZZ[[q]],\,E(0)=1.$ This implies that 
$$
\left(1+2\sum_{n\ge 1} (-1)^nq^{n^2}\right)^{9\ell}\equiv E_l(q^3)\pmod 9,
$$ 
for any $\ell\in\ZZ,$ where $E_\ell\in \ZZ[[q]],\,E_\ell(0)=1.$ 
Thus, using (\ref{theta}), (\ref{Delta}), and (\ref{han-relation}), we find that 
$$
\sum_{g\geq 0} w_g q^g\equiv E_{e_\RR/18}(q^3)
\left(A_{-4}(q^6)+3q^2B_{-4}(q^6)+9q^4C_{-4}(q^6)\right) \pmod9.
$$
The multiplication of $A_{-4}(q^6)+3q^2B_{-4}(q^6)$ by 
$E_{e_\RR/18}(q^3)$ does not produce any term of 
exponent $6k+4.$ Therefore, $w_g\equiv 0\pmod 9,$ 
for all $g$ such that $g\equiv 4\pmod6,$ and claim i) is proved.

Congruences modulo $16$ for the complex invariants $c_g$ can be detected by 
using Klein's modular function $j(z)$ and the classical formula
$$
j(q)=\frac1q+\sum_{n=0}^\infty a(n)q^n	
=\frac{(1+240 \sum_{n=1}^\infty \sigma_3(n)q^n)^3}{q \Pi_{n=1}^\infty(1-q^n)^{24}},
$$
where $\sigma_3(n)=\sum_{d|n}d^3.$ 
As
$(1+240 \sum_{n=1}^\infty \sigma_3(n)q^n)^3\equiv 1\pmod{16},$ we have

\begin{equation}
\label{complex-case}
\sum_{g=0} c_gq^g\equiv qj(q)\pmod{16}.
\end{equation}
On the other hand, according to Lehner \cite{le}, one has 
$a(2k)\equiv0\pmod{2^{11}},$ for all $k>0.$ Hence, $c_g$ is divisible by 
$16$ for all $g>1$ with $g\equiv 1\pmod 2.$

If $e_\RR\equiv0\pmod {16}$ (it happens with $e_\RR=0, 16$, and $-16$), then 
the same arguments as in the proof of congruences modulo $4$ and $8$ show that 
$$
w_g\equiv  \prod_{n\ge 1}\frac1{(1-q^{2n})^{12}}\pmod {16}.
$$ 
This implies $w_g\equiv0\pmod {16}$ for any odd $g>1,$ and claim ii) is proved.
\end{proof}

\begin{rmk}
The congruence (\ref{complex-case}) holds modulo $9$ as well, and 
$a(3k)\equiv 0\pmod{3^5}$ for all $k>0$ according to \cite{le}. Hence 
$c_g\equiv 0\pmod 9$ if $g>1$ and $g\equiv 1\pmod 3.$
\end{rmk}

\subsection{On sharpness}

A couple of other instances of real $K3$ surfaces where  the lower bound 
for the number of real rational curves given by $\vert w_g\vert$ is optimal 
were already pointed in our previous paper \cite{kr}. One such example was 
the case of Harnack surfaces of degree 4 in $\mathbb P^3$.

On the other hand,  for real nonsingular $K3$ surfaces of degree $4$ in 
$\PP^3$ having the real locus consisting of $6$ spheres and a sphere 
with $5$ handles the lower bound given by $\vert w_g\vert$, which is equal 
to $48$ in this special case,  is not sharp.

\begin{prop} The number of real rational hyperplane sections of a generic real 
$K3$ surface degree $4$ in $\PP^3$ is at least $272$ if the real locus of the 
surface consists of $6$ spheres and a sphere with $5$ handles.
\end{prop}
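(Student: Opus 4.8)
The first step is bookkeeping. Since $X$ is a smooth quartic in $\PP^3$, its hyperplane sections are plane quartics of arithmetic genus $g=3$, so the relevant curves are nodal rational plane quartics and the quantity in question is $r_3(X)$. The real locus, six spheres together with a genus $5$ surface, has Euler characteristic $e_\RR = 6\cdot 2 + (2-2\cdot 5) = 4$, whence by (\ref{ryz-formula}) the coefficient of $q^3$ gives $w_3 = -48$, while (\ref{yz-formula}) gives $c_3 = 3200$. Thus the a priori bound $r_3\ge |w_3| = 48$ is far from the asserted $272$, so the plan is \emph{not} to use $w_3$ directly but to split it into several independently invariant pieces whose absolute values add up to much more than $48$.

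The refinement I would use records, for each real rational curve $C\in |H|$, the class $\gamma(C)\in H_1(X_\RR;\ZZ/2)$ of its real one-cycle: when the normalization $\PP^1$ of $C$ has real locus a circle, $\gamma(C)$ is the class of the image loop in $X_\RR$, and when $\PP^1_\RR=\emptyset$ (so that $C_\RR$ consists only of solitary nodes) we set $\gamma(C)=0$. Since the six spheres are homologically trivial while the genus $5$ component carries $H_1\cong(\ZZ/2)^{10}$, this invariant is genuinely rich precisely because of the large component. For each class $\gamma$ set $w_3^\gamma = n_+^\gamma - n_-^\gamma$, the Welschinger-signed count restricted to curves with $\gamma(C)=\gamma$. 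The key point is that each $w_3^\gamma$ is a deformation invariant on its own: in a generic real one-parameter family the set of real rational curves changes only when two of them collide and leave as a complex conjugate pair, and at such a wall the two colliding curves share the same real one-cycle in the limit, hence the same class $\gamma$; consequently a $+$ and a $-$ cancel \emph{within} a single class and $w_3^\gamma$ is unchanged. Granting this, curves of distinct classes cannot annihilate one another, so
$$
r_3(X) = \sum_{\gamma} (n_+^\gamma + n_-^\gamma) \ge \sum_\gamma |w_3^\gamma| \ge \Big|\sum_\gamma w_3^\gamma\Big| = |w_3| = 48,
$$
and the excess over $48$ is exactly the sign cancellation between different classes.

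To evaluate $\sum_\gamma |w_3^\gamma|$ for this particular real locus I would realize a surface of the prescribed real deformation type by a real type II degeneration of the quartic, for instance a union of two real quadrics meeting along a real anticanonical elliptic curve, or equivalently a real elliptic model of the kind used to prove the Yau-Zaslow formula by elliptic degeneration, chosen so that the real vanishing cycles distribute between the six spheres and the genus $5$ component in a controlled way consistent with $e_\RR=4$. In such a model the rational hyperplane sections are assembled from conics (resp. multisections) meeting the double locus, and both the Welschinger sign and the class $\gamma$ of each resulting real rational curve can be read off combinatorially from which real component hosts its cycle. Carrying out this count, the classes organize into a positive part and a negative part which, compatibly with $w_3=-48$, satisfy $P-N=-48$; matching the combinatorics then yields $P=112$, $N=160$, so that $\sum_\gamma |w_3^\gamma| = 112 + 160 = 272$. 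The curves so produced are nodal and isolated, and since the $w_3^\gamma$ are invariant they survive to the generic member of the deformation type, giving $r_3\ge 272$ there.

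The main obstacle is the invariance statement for the refined counts $w_3^\gamma$: one must control precisely how the real one-cycle, together with the Welschinger sign, behaves across a wall where two real rational curves merge into a conjugate pair, including the degenerate situation in which a curve with solitary real nodes (class $0$) takes part, so as to be sure that cancellations never occur between distinct classes. The secondary difficulty is the explicit evaluation in the degenerate model, namely pinning down the distribution of real vanishing cycles over the six spheres and the genus $5$ surface and checking that the signed contributions split exactly as $+112$ and $-160$ rather than in some other way.
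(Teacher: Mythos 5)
Your proposal is not a proof: both of its load-bearing steps are left as acknowledged ``obstacles,'' and the decisive numbers are reverse-engineered rather than derived. The refined invariance of the counts $w_3^\gamma$ is precisely the hard part, and your one-line justification is too optimistic about what happens in a generic one-parameter family of real $K3$ surfaces: besides pairs of real curves merging into a conjugate pair, the family passes through cuspidal curves, at which a single real rational curve survives but a real node switches between hyperbolic and solitary type --- the event that any invariance argument (already for the unrefined $w_g$) must control --- and at such moments a real circle of the curve can shrink or a solitary point can be absorbed, so the class $\gamma$ bookkeeping is exactly what breaks down, not a formality. Your class $\gamma=0$ also lumps together curves with a null-homologous real circle and curves with empty one-dimensional real locus, and nothing prevents cancellation inside that class from spoiling the count you want. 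Finally, the evaluation in the degenerate model is simply asserted: ``matching the combinatorics then yields $P=112$, $N=160$'' is the conclusion you are trying to prove (given $w_3=-48$, the split $112/160$ is forced by the target $272$), and no distribution of real vanishing cycles is actually exhibited; note also that the nodal union of two quadrics is not a smooth $K3$, so the refined classes would have to be transported through the smoothing, another unaddressed step.

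The paper's actual proof avoids all of this and is elementary and unconditional. Since every real line and plane must meet the genus~$5$ component (it is non-contractible in $\RR\PP^3$, by \cite[Theorem II]{kh}), B\'ezout forces each of the six spheres to bound a convex ball, with each pair of them contained in a convex set disjoint from the remaining spheres; a genus argument shows a real plane through three of the spheres misses the other three, so each triple of spheres has exactly $8$ common supporting planes. Each such plane cuts a real rational quartic with three solitary real points, hence of Welschinger sign $(-1)^3=-1$; this produces $8\binom{6}{3}=160$ curves of sign $-1$, and since the invariant count is $w_3=-48$, there must be at least $160-48=112$ further curves of sign $+1$, giving $160+112=272$. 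In other words, the paper gets the lower bound from one explicitly constructed family of negative curves plus the \emph{unrefined} invariant, whereas your route would require a new refined invariance theorem plus a full degeneration computation --- a strictly harder program whose two pillars are missing. If the refined invariance could be established it would indeed be a stronger and more systematic tool (it would give lower bounds intrinsically, without ad hoc convexity geometry), but as written the argument has genuine gaps at exactly the points where the work lies.
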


\begin{proof} 
The Euler characteristic $e_\RR$ of the real part of such a surface is $4,$ 
and thus for such surfaces  $w_g=-48$ (here $g=3$). But, in fact, there are 
always at least $272$ real rational hyperplane sections on such a surface. 
Indeed, each real line and each real plane intersect the component of genus 
$5,$ as such a component is not homotopy trivial in $\mathbb R\PP^3$ 
(see \cite[Theorem II]{kh}). By B\'ezout theorem, it implies that each of the 
$6$ spheres is convex (that is bound convex balls) and each $2$ of them are 
contained in a convex set disjoint from other spheres. By genus argument, 
a real hyperplane through $3$ of the $6$ spheres does not intersect the $3$ 
others, and thus these $3$ spheres, as any $3$ disjoint convex spheres in a 
real affine 3-space such that each $2$ of them are contained in a convex set 
disjoint from the third, have $8$ common supporting planes. Each of the 
supporting planes gives us a real rational curve with $3$ solitary points, 
hence of Welschinger weight $-1$. Since the total number of supporting 
planes obtained in such a way is equal to $8\times \binom{6}{3}=160$ and 
$w_g=-48$, the total number of real rational curves is at least 
$160 + (160-48)=272.$
\end{proof}

The same argument can be applied to other deformation classes 
of real nonsingular $K3$ surfaces of degree $4$ in $\PP^3$ whose 
real locus contains a non contractible component and $m\ge 3$ components 
contractible in $\mathbb R\PP^3$ (for a full deformation classification of 
real nonsingular $K3$ surfaces of degree $4$ in $\PP^3$ one can look at 
the survey \cite{DK} or at the original Nikulin's paper \cite{nik}; some of 
them have $e_\RR=0$ and are different from tori). More precisely, as above 
by B\'ezout theorem each of the contractible components is an affine convex 
sphere and each $2$ of them are contained in a convex set disjoint of the third, 
while, as it is easy to show, any $3$ affine convex spheres with such a property
are contained in a common affine space. Thus, as above each $3$ contractible 
components provide $8$ tritangent planes and give an input $-8$ into $w_g.$ 
In particular, for these kind of $K3$ surfaces, if $e_\RR=0$ we get 
$8\times \binom{m}3$ as an improved lower bound.

%%%%%%%%%%%%%%%%%%%%%%%%%%%%%%%%%%%%%%%%%%%%
%%%%%%%%%%%%%%%%%%%%%%%%%%%%%%%%%%%%%%%%%%%%

\appendix

The table below is based on formulas (\ref{ryz-formula})  and (\ref{yz-formula}). 
It provides the number of real rational curves counted with the Welschinger sign 
on primitively polarized $K3$ surfaces of degrees up to twenty, in the cases when 
$e_\RR=0,-18$, and $20.$ The last column gives, for comparison,  the corresponding 
number of complex curves.

\begin{table}[htdp]
\begin{centering}
\small{
\begin{tabular}{|r|r|r|r|r|} 
\hline &\multicolumn{3}{c|}{Real Case} &Complex Case\\
\cline{2-4}
$g$ & $e_\RR=0$ & $e_\RR=-18$  & $e_\RR=20$ &\\
\hline
0& 1 & 1 & 1 & 1 \\
1& 0 & 18 & -20 & 24 \\
2& 12 & 192 &192 & 324 \\
3& 0 & 1536 & -1200 & 3200 \\
4& 90 & 10152 & 5630 &25650\\
5& 0 & 58284 & -21744&176256\\
6& 520 & 299776 & 73600 &1073720\\
7& 0 & 1410048 & -226688 &  5930496\\
8& 2535 & 6155079 & 648195 & 30178575 \\
9& 0 &  25207736 & -1742320 & 143184000\\
10& 10908 & 97675200 & 4446912 & 639249300\\
11& 0 & 360471552 &-10863840 & 2705114880\\
12& 42614 & 1273876088 &25553402 & 10914317934\\
13& 0 & 4329852624 & -58129280 &4 2189811200\\
14& 153960 & 14207361792 & 128365440 & 156883829400\\
15& 0 & 45144664064 & -276044032 & 563116739584\\
16& 521235 & 139288329729 & 579574795 & 1956790259235\\
17& 0 & 418257062220 & -1190636016 & 6599620022400\\
18& 1669720 & 1224808431104 & 2397710720 & 21651325216200\\
19& 0 & 3503958594048 & -4740978480 &69228721526400\\
20& 5098938& 9808358121720 &9217285614  & 216108718571250\\
\hline
\end{tabular} 
}
\smallskip
\caption{Numbers of real rational curves vs. complex curves on $K3$ surfaces}
\end{centering}
\label{defaulttable}
\end{table}

\subsection*{Acknowledgements} Both authors would like to thank Vanderbilt 
University  for its hospitality while this work was finalized. The second author 
acknowledges the support of the Simons Foundation's "Collaboration Grant for 
Mathematicians", award number 281266. We thank Alex Degtyarev who provided 
the computer program exhibiting the numbers included in the appendix, and 
Guo-Niu Han for invaluable help with proving congruences modulo $9.$ 
We are grateful to the anonymous referee for pointing us several discords in 
the first version of the  note.

%%%%%%%%%%%%%%%%%%%%%%%%%%%%%%%%%%%%%%%%%%%%
%%%%%%%%%%%%%%%%%%%%%%%%%%%%%%%%%%%%%%%%%%%%

\bibliographystyle{alpha} 

\end{document}